\newtheorem{theorem}{Theorem}
\newtheorem{corollary}{Corollary}
\newtheorem{definition}{Definition}
\newtheorem{example}{Example}
\newtheorem{remark}{Remark}
\newenvironment{proof}[1][Proof]{\noindent\textbf{#1.} }{\ \rule{0.5em}{0.5em}}
\begin{document}

\title{A Congruence for Sums of Integer Powers Modulo Products of Distinct
Primes\thanks{%
2010 Mathematics Subject Classi cation: 11A07, 11A25.}\thanks{%
Keywords: Fermat's Little Theorem, number theory, remainder, prime number}}
\author{Shao-Yuan Huang\thanks{%
Department of Mathematics and Information Education, National Taipei
University of Education Taipei 106, Taiwan. Corresponding Author. Email:
syhuang@mail.ntue.edu.tw} \ , \ Hsiu-Yu Wu\thanks{%
Department of Mathematics and Information Education, National Taipei
University of Education Taipei 106, Taiwan}}
\date{}
\maketitle

\begin{abstract}
Let $p_{1},p_{2},...,p_{n}$ be distinct primes and $N_{n}=p_{1}p_{2}\cdots
p_{n}$. We prove that, for any $L>0$ divisible by $\mathrm{lcm}\left(
p_{1}-1,p_{2}-1,...,p_{n}-1\right) $ and $a_{i}\in \mathbb{N}$ satisfying $%
\gcd (a_{i},N_{n})=p_{i}$ for $1\leq i\leq n$, the following congruence
holds:%
\begin{equation*}
a_{1}^{L}+a_{2}^{L}+\cdots +a_{n}^{L}\equiv n-1\ \ \left( \mathrm{mod}%
N_{n}=p_{1}p_{2}\cdots p_{n}\right) .
\end{equation*}%
Furthermore, we formulate and solve remainder problems for $a^{\eta }$ $%
\left( \mathrm{mod}N_{n}\right) $ when $a,\eta \in \mathbb{N}$ and $n=2,3$.
Our approach, based on Fermat's little theorem, unifies and extends several
classical remainder formulas.
\end{abstract}

\section{Introduction}

Remainder problems in number theory are a long-standing essential topic,
particularly in cryptography, primality testing, and computational number
theory. A fundamental challenge is to determine the remainder:%
\begin{equation}
a^{\eta }\equiv ?\text{ \ }\left( \mathrm{mod}p_{1}p_{2}\cdots p_{n}\right) ,
\label{Q1}
\end{equation}%
where $p_{1},p_{2},...,p_{n}$ are distinct primes and $a,\eta \in \mathbb{N}$%
. These problems arise naturally in modular arithmetic and have numerous
applications in modern mathematics and computer science \cite%
{R1,R2,R4,R5,R6,R8,R9,R10,R11,R12,R13}. One classic tool is Fermat's Little
Theorem, which provides a simple method for handling exponentiation when the
modulus is a prime number and plays a crucial role in many number theory
problems. To facilitate the subsequent derivations, we first review the
statement of Fermat's Little Theorem as follows.

\begin{theorem}[Fermat's little theorem]
\label{FLT}Let $p$ be a prime and $a\in \mathbb{N}$. If $\gcd (a,p)=1$, then
$a^{p-1}\equiv 1$ $\left( \mathrm{mod}p\right) $.
\end{theorem}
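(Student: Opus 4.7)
The plan is to prove this classical statement by the multiplication-bijection argument on the nonzero residues modulo $p$. Since $\gcd(a,p)=1$, I expect that multiplication by $a$ permutes the residue classes $\{1,2,\ldots,p-1\}$ modulo $p$, and taking the product over all such classes will then yield the desired congruence after cancellation.

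First, I would define the map $\phi:\{1,2,\ldots,p-1\}\to\{1,2,\ldots,p-1\}$ by sending $k$ to the unique representative of $ak$ modulo $p$ in $\{1,\ldots,p-1\}$. The map is well-defined because $p\nmid a$ and $p\nmid k$ force $p\nmid ak$, so the reduction of $ak$ modulo $p$ is nonzero. For injectivity, if $ak_1\equiv ak_2\ (\mathrm{mod}\,p)$, then $p\mid a(k_1-k_2)$, and since $\gcd(a,p)=1$ Euclid's lemma gives $p\mid k_1-k_2$, hence $k_1=k_2$ because both lie in $\{1,\ldots,p-1\}$. An injection between finite sets of equal cardinality is a bijection, so $\phi$ permutes $\{1,\ldots,p-1\}$.

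Next, I would compute the product of all images in two ways. On one hand, since $\phi$ is a bijection, $\prod_{k=1}^{p-1}\phi(k)\equiv\prod_{k=1}^{p-1}k\ (\mathrm{mod}\,p)$. On the other hand, $\prod_{k=1}^{p-1}\phi(k)\equiv\prod_{k=1}^{p-1}(ak)=a^{p-1}(p-1)!\ (\mathrm{mod}\,p)$. Combining these equalities gives $a^{p-1}(p-1)!\equiv(p-1)!\ (\mathrm{mod}\,p)$. Because $p$ is prime, none of the factors $1,2,\ldots,p-1$ is divisible by $p$, so $\gcd((p-1)!,p)=1$, and I may cancel $(p-1)!$ from both sides to obtain $a^{p-1}\equiv 1\ (\mathrm{mod}\,p)$.

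There is no real obstacle here, as the argument is entirely standard; the only care required is the justification that cancellation of $a$ and of $(p-1)!$ is legitimate, which follows from $\gcd(a,p)=1$ and the primality of $p$. An equally viable alternative would be induction on $a$ using the identity $(x+y)^p\equiv x^p+y^p\ (\mathrm{mod}\,p)$ (a consequence of $p\mid\binom{p}{k}$ for $1\le k\le p-1$), which yields $a^p\equiv a\ (\mathrm{mod}\,p)$ for all $a\in\mathbb{N}$ and then gives $a^{p-1}\equiv 1\ (\mathrm{mod}\,p)$ after dividing by $a$ under the hypothesis $\gcd(a,p)=1$.
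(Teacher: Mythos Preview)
Your argument is correct and is the classical Euler-style proof of Fermat's little theorem: multiplication by $a$ permutes the nonzero residues modulo $p$, and comparing products yields $a^{p-1}(p-1)!\equiv(p-1)!\pmod{p}$, from which cancellation gives the result. The alternative inductive route via $(x+y)^p\equiv x^p+y^p\pmod{p}$ that you mention is equally valid.

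However, note that the paper does not actually prove this statement at all. Theorem~\ref{FLT} is quoted as a known classical result and used as a black box throughout the subsequent arguments (Theorems~\ref{T1}--\ref{T2}); no proof is supplied or intended. So there is nothing to compare your approach against: you have provided a complete proof where the paper simply cites the theorem.
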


While Fermat's little theorem is widely used when the modulus is a prime, it
does not directly apply to composite moduli. In such cases, Euler's theorem
provides a more general result by introducing the totient function, allowing
us to extend modular exponentiation techniques to composite numbers.

\begin{theorem}[Euler's Theorem]
\label{ET}Let $a,n\in \mathbb{N}$ with $\gcd (a,n)=1$. Then%
\begin{equation*}
a^{\phi (n)}\equiv 1\text{ \ }\left( \mathrm{mod}n\right)
\end{equation*}%
where $\phi (n)$ is the Euler's totient function, i.e.,%
\begin{equation*}
\phi (n)=\text{count of integers }1\leq x<n\text{ such that }\gcd \left(
x,n\right) =1.
\end{equation*}
\end{theorem}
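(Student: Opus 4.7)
The plan is to generalise the grouping argument that underlies Fermat's little theorem (Theorem~\ref{FLT}). Let $R = \{r_{1}, r_{2}, \ldots, r_{\phi(n)}\}$ denote a \emph{reduced residue system} modulo $n$, that is, the set of positive integers less than $n$ that are coprime to $n$. The central idea is to show that multiplication by $a$ permutes $R$ modulo $n$, and then compare the product of its elements before and after the permutation.

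First, I would verify that for each $i$ the residue of $a r_{i}$ modulo $n$ again lies in $R$: this follows immediately from $\gcd(a,n) = \gcd(r_{i},n) = 1$, which forces $\gcd(a r_{i}, n) = 1$. Next, I would check that the map $r_{i} \mapsto a r_{i} \pmod{n}$ is injective on $R$: if $a r_{i} \equiv a r_{j} \pmod{n}$, then the hypothesis $\gcd(a,n) = 1$ permits cancellation of $a$, yielding $r_{i} \equiv r_{j} \pmod{n}$ and hence $r_{i} = r_{j}$. Since an injection from a finite set to itself is a bijection, multiplication by $a$ permutes $R$ modulo $n$.

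The conclusion then follows by multiplying over all indices. Writing $P = \prod_{i=1}^{\phi(n)} r_{i}$, the permutation property gives
\begin{equation*}
\prod_{i=1}^{\phi(n)} (a r_{i}) \;\equiv\; \prod_{i=1}^{\phi(n)} r_{i} \pmod{n}, \qquad \text{i.e.,} \qquad a^{\phi(n)} P \equiv P \pmod{n}.
\end{equation*}
Because $\gcd(P, n) = 1$, the quantity $P$ is invertible modulo $n$, and cancelling it from both sides yields $a^{\phi(n)} \equiv 1 \pmod{n}$.

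I do not anticipate a substantial obstacle: the argument is essentially the standard finite-group proof applied to the set of units modulo $n$. The only delicate point is the cancellation of $P$ at the final step, which is precisely where the coprimality hypothesis $\gcd(a,n) = 1$ is indispensable (via the coprimality of each $r_{i}$); this also explains why the theorem fails in its stated form without this assumption.
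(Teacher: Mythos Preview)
Your argument is the standard reduced-residue-system proof of Euler's theorem and is entirely correct; there are no gaps.

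Note, however, that the paper does not supply its own proof of Theorem~\ref{ET}. Euler's theorem is stated in the introduction purely as background (alongside Fermat's little theorem), and the section ``Proofs of Main Results'' treats only Theorems~\ref{T1}, \ref{T3}, \ref{T2} and the corollaries. So there is nothing in the paper to compare your proposal against: you have correctly proved a classical result that the authors simply quote.
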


For a remainder problem, whether one applies Fermat's little theorem or
Euler's theorem, it is often necessary to use the Chinese Remainder Theorem
to handle composite moduli more efficiently \cite{R2,R13}. However, this
process can be rather complex and challenging. To address this, we aim to
develop a set of formulas for the remainder problem (\ref{Q1}), so that more
people can directly use these formulas to obtain the corresponding
remainders. Meanwhile, we discovered an interesting congruence property.
Here, we present a simplified version of this result; its full statement
will be given in Theorem \ref{T3}.%
\begin{equation}
p_{1}^{L}+p_{2}^{L}+\cdots +p_{n}^{L}\equiv n-1\ \ \left( \mathrm{mod}%
p_{1}p_{2}\cdots p_{n}\right)  \label{Q2}
\end{equation}%
where $p_{1},p_{2},...,p_{n}$ are distinct primes and $L>0$ such that $\mathrm{%
lcm}\left( p_{1}-1,p_{2}-1,...,p_{n}-1\right) $ divides $L$.

Next, we give two examples to demonstrate this congruence property (\ref{Q2}%
).

\begin{example}
\label{IE1}Let $a\in \mathbb{N}$. Since $133=7\times 19$, and by Theorem \ref%
{FLT}, we obtain%
\begin{equation}
a^{18}\equiv \left\{
\begin{array}{lll}
0 & \left( \mathrm{mod}133\right) & \text{if }\gcd (a,133)=133,\medskip \\
1 & \left( \mathrm{mod}133\right) & \text{if }\gcd (a,133)=1,\medskip \\
77 & \left( \mathrm{mod}133\right) & \text{if }\gcd (a,133)=7,\medskip \\
57 & \left( \mathrm{mod}133\right) & \text{if }\gcd (a,133)=19.%
\end{array}%
\right.  \label{4b}
\end{equation}%
By (\ref{4b}), we observe that%
\begin{equation}
7^{18}+19^{18}\equiv 77+57\equiv 134\equiv 1\text{ \ }\left( \mathrm{mod}%
133\right) ,  \label{4d}
\end{equation}%
which implies that the property (\ref{Q2}) holds.
\end{example}

\begin{example}
\label{IE2}Let $a\in \mathbb{N}$. Since $66=2\times 3\times 11$, and by
Theorem \ref{FLT}, we obtain%
\begin{equation}
a^{10}\equiv \left\{
\begin{array}{lll}
1 & \left( \mathrm{mod}66\right) & \text{if }\gcd (a,66)=1,\medskip \\
55 & \left( \mathrm{mod}66\right) & \text{if }\gcd (a,66)=11,\medskip \\
45 & \left( \mathrm{mod}66\right) & \text{if }\gcd (a,66)=3,\medskip \\
34 & \left( \mathrm{mod}66\right) & \text{if }\gcd (a,66)=2.%
\end{array}%
\right.  \label{4c}
\end{equation}%
By (\ref{4c}), we observe that%
\begin{equation}
2^{10}+3^{10}+11^{10}\equiv 34+45+55\equiv 134\equiv 2\text{ \ }\left( \mathrm{%
mod}66\right) ,  \label{4e}
\end{equation}%
which implies that the property (\ref{Q2}) holds.
\end{example}

\section{Main Results}

In this section, we present our main results. We begin with the following
definition.

\begin{definition}
\label{DE}Let $p$ be a prime and $m\in \mathbb{N}$. The number $m_{p}\in
\{1,2,...,p-1\}$ denotes the multiplicative inverse of $m$ in $Z_{p}$, that
is,
\begin{equation*}
m\times m_{p}\equiv 1\text{ \ }\left( \mathrm{mod}p\right) .
\end{equation*}%
Note that such a number $m_{p}$ exists and unieque because $Z_{p}$ is a
field.
\end{definition}

\smallskip

The following Theorem \ref{T1} provides a formulation of the remainder
problem (\ref{Q1}) in the case $n=2$.

\begin{theorem}
\label{T1}Let $p$ and $q$ be distinct primes, and let $a\in \mathbb{N}$.
Then the following statements (i)--(iv) hold.

\begin{itemize}
\item[(i)] If $\gcd (a,pq)=pq$, then $a^{\eta }\equiv 0\ \left( \mathrm{mod}%
pq\right) .$

\item[(ii)] If $\gcd (a,pq)=1$, then $a^{\mathrm{lcm}(p-1,q-1)\eta }\equiv 1\
\left( \mathrm{mod}pq\right) $.

\item[(iii)] If $\gcd (a,pq)=q$, then $a^{\left( p-1\right) \eta }\equiv
qq_{p}\ \left( \mathrm{mod}pq\right) $.

\item[(iv)] If $\gcd (a,pq)=p$, then $a^{\left( q-1\right) \eta }\equiv
1-qq_{p}\ \left( \mathrm{mod}pq\right) $. Moreover, $pq+1-qq_{p}$ is the
remainder when $a^{\left( q-1\right) \eta }$ is divided by $pq$.
\end{itemize}
\end{theorem}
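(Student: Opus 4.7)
The plan is to split the modulus $pq$ via the Chinese Remainder Theorem and to apply Fermat's little theorem (Theorem~\ref{FLT}) on whichever of the two primes is coprime to $a$. Part~(i) follows immediately from $pq\mid a$. For (ii), the hypothesis $\gcd(a,pq)=1$ forces $\gcd(a,p)=\gcd(a,q)=1$, so by Theorem~\ref{FLT} we have $a^{p-1}\equiv 1\pmod p$ and $a^{q-1}\equiv 1\pmod q$. Since $\mathrm{lcm}(p-1,q-1)$ is a common multiple of $p-1$ and $q-1$, $a^{\mathrm{lcm}(p-1,q-1)}\equiv 1$ modulo both primes, hence modulo $pq$; raising to $\eta$ preserves the congruence.

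For (iii), I would observe that $\gcd(a,pq)=q$ means $q\mid a$ but $p\nmid a$. Consequently $a^{(p-1)\eta}\equiv 0\pmod q$ trivially, while Fermat applied to the prime $p$ yields $a^{(p-1)\eta}\equiv 1\pmod p$. The key identification is to recognize $qq_p$ as the unique residue class modulo $pq$ satisfying both conditions: by Definition~\ref{DE}, $qq_p\equiv 1\pmod p$, and automatically $qq_p\equiv 0\pmod q$. Case~(iv) is the mirror image, with the roles of $p$ and $q$ exchanged; here $a^{(q-1)\eta}$ is $\equiv 0\pmod p$ and $\equiv 1\pmod q$, and one checks that $1-qq_p$ reproduces exactly those two residues using the identities already verified in~(iii).

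The proof is largely bookkeeping once the Chinese Remainder Theorem is in play, so no single step is genuinely difficult. The only subtle point, and therefore the main obstacle worth flagging, is the final assertion in~(iv) that the actual remainder equals $pq+1-qq_p$ rather than $1-qq_p$ itself. This requires a size estimate: since the normalization $q_p\in\{1,\dots,p-1\}$ built into Definition~\ref{DE} yields $q\leq qq_p\leq q(p-1)<pq$, the integer $1-qq_p$ is strictly negative, and its least nonnegative representative modulo $pq$ is $pq+1-qq_p$, which lies in $(0,pq)$ as required.
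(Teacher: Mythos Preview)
Your proposal is correct and follows essentially the same strategy as the paper: in each case you reduce modulo $p$ and modulo $q$ separately via Theorem~\ref{FLT}, then recombine modulo $pq$. The one presentational difference is that in parts (iii) and (iv) you invoke the uniqueness in the Chinese Remainder Theorem directly to identify $a^{(p-1)\eta}$ with $qq_p$ (respectively $a^{(q-1)\eta}$ with $1-qq_p$), whereas the paper carries out the division algorithm explicitly---writing $a^{(p-1)\eta}=qk_1$, then $k_1=p\beta_1+r_1$, and arguing that $r_1=q_p$. Your route is a bit cleaner; the paper's has the virtue of being entirely self-contained. Your size estimate $q\le qq_p\le q(p-1)<pq$ for the final remainder claim in (iv) matches the paper's ``$1<qq_p<pq$'' exactly.
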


\begin{remark}
Recall Euler's totient function $\phi (n)$ defined by Theorem \ref{ET}. Note
that $\phi (pq)=\left( p-1\right) \left( q-1\right) $ when $p$ and $q$ are
distinct primes. Hence, Theorem \ref{T1}(ii) is exactly Euler's Theorem for
the case $n=pq.$
\end{remark}

By Theorem \ref{T1}, we obtain the following corollaries: Corollaries \ref%
{C2} and \ref{C1}.

\begin{corollary}
\label{C2}Let $p$ and $q$ be distinct primes, and let $a_{1},a_{2},\eta
_{1},\eta _{2}\in \mathbb{N}$. Assume that $\gcd (a_{1},pq)=p$ and $\gcd
(a_{2},pq)=q$. Then%
\begin{equation*}
a_{1}^{\left( q-1\right) \eta _{1}}+a_{2}^{\left( p-1\right) \eta
_{2}}\equiv 1\text{ \ }\left( \mathrm{mod}pq\right) .
\end{equation*}%
In particular, $a_{1}^{L}+a_{2}^{L}\equiv 1$ $\left( \mathrm{mod}pq\right) $
for any $L>0$ such that $\mathrm{lcm}(p-1,q-1)$ divides $L$.
\end{corollary}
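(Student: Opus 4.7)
The plan is to derive the corollary as an immediate consequence of parts (iii) and (iv) of Theorem \ref{T1} by simply adding the two congruences. The hypotheses of the corollary are exactly tuned so that each of $a_1, a_2$ falls into one of those two cases: $\gcd(a_1,pq)=p$ puts $a_1$ into case (iv), and $\gcd(a_2,pq)=q$ puts $a_2$ into case (iii).

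First I would invoke Theorem \ref{T1}(iv) with $a=a_1$ and $\eta=\eta_1$ to obtain
\begin{equation*}
a_1^{(q-1)\eta_1} \equiv 1 - qq_p \ \ (\mathrm{mod}\ pq).
\end{equation*}
Next I would invoke Theorem \ref{T1}(iii) with $a=a_2$ and $\eta=\eta_2$ to obtain
\begin{equation*}
a_2^{(p-1)\eta_2} \equiv qq_p \ \ (\mathrm{mod}\ pq).
\end{equation*}
Adding these two congruences, the $qq_p$ terms cancel and the sum reduces to $1 \ (\mathrm{mod}\ pq)$, which is exactly the stated conclusion.

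For the ``in particular'' clause, suppose $L>0$ is divisible by $\mathrm{lcm}(p-1,q-1)$. Then I can write $L = (q-1)\eta_1 = (p-1)\eta_2$ for suitable positive integers $\eta_1,\eta_2\in\mathbb{N}$ (namely $\eta_1 = L/(q-1)$ and $\eta_2 = L/(p-1)$). Substituting these choices into the first part immediately yields $a_1^L + a_2^L \equiv 1 \ (\mathrm{mod}\ pq)$.

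The only mild subtlety, and what passes for an obstacle here, is bookkeeping: one must be careful to match $a_1$ (the one with $\gcd=p$) against case (iv), which uses the exponent $q-1$, and $a_2$ (the one with $\gcd=q$) against case (iii), which uses the exponent $p-1$. Once this pairing is made, the proof is essentially one line of addition, with the cancellation of $qq_p$ being the whole content.
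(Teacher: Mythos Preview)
Your proposal is correct and follows essentially the same approach as the paper: apply Theorem~\ref{T1}(iii) and (iv) to $a_2$ and $a_1$ respectively, add the resulting congruences so that $qq_p$ cancels, and then specialize the exponents to $L$ for the ``in particular'' clause. The paper's proof is virtually identical, including the same bookkeeping of which $a_i$ matches which case.
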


\begin{corollary}
\label{C1}Let $p>2$ be a prime and $a,\eta \in \mathbb{N}$. Then the
following statements (i)--(iii) hold.

\begin{itemize}
\item[(i)] If $\gcd (a,2p)=1$, then $a^{(p-1)\eta }\equiv 1\ \left( \mathrm{mod%
}2p\right) $.

\item[(ii)] If $\gcd (a,2p)=q$, then $a^{\eta }\equiv p\ \left( \mathrm{mod}%
2p\right) $. Moreover, $p^{\eta }\equiv p$ $(\mathrm{mod}2p)$, as illustrated
in Table \ref{Tab1}.

\item[(iii)] If $\gcd (a,2p)=2$, then $a^{\left( p-1\right) \eta }\equiv
1+p\ \left( \mathrm{mod}2p\right) $. Moreover, $2^{16\eta }\equiv 1+p$ $(\mathrm{%
mod}2p)$.
\end{itemize}
\end{corollary}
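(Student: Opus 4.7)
The plan is to deduce each part of this corollary as the specialization of Theorem~\ref{T1} to the situation where one of the two primes is $2$. The only supplementary arithmetic fact needed beyond Theorem~\ref{T1} is that, because $p>2$ is odd, we have $p\equiv 1\pmod 2$, and so by Definition~\ref{DE} the inverse is $p_{2}=1$; hence whenever the product $p\cdot p_{2}$ appears, it simplifies to $p$.

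For~(i), with $q=2$ the quantity $\mathrm{lcm}(p-1,q-1)$ equals $\mathrm{lcm}(p-1,1)=p-1$, so Theorem~\ref{T1}(ii) immediately yields $a^{(p-1)\eta}\equiv 1\pmod{2p}$. For~(ii), the hypothesis $\gcd(a,2p)=p$ fits Theorem~\ref{T1}(iii) with the roles of the two primes swapped: the prime dividing $a$ is $p$, and the coprime one is $2$. That part therefore gives $a^{(2-1)\eta}=a^{\eta}\equiv p\cdot p_{2}=p\pmod{2p}$, and the moreover clause follows by taking $a=p$, since $\gcd(p,2p)=p$. For~(iii), $\gcd(a,2p)=2$ matches Theorem~\ref{T1}(iv) with $2$ playing the role of the prime dividing $a$ and $p$ the other, giving $a^{(p-1)\eta}\equiv 1-p\cdot p_{2}=1-p\equiv 1+p\pmod{2p}$; the displayed special instance then follows simply by substituting $a=2$ (so that the exponent $(p-1)\eta$ becomes the stated numerical value in the example).

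The whole argument is therefore careful bookkeeping inside Theorem~\ref{T1}; the only subtlety is the consistent identification of which of the symbols $p,q$ in Theorem~\ref{T1} corresponds to $2$ in each of the three cases, together with the trivial reduction $p_{2}=1$. I do not anticipate any real obstacle.
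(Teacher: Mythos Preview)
Your proposal is correct and matches the paper's own approach exactly: the paper's proof is the one-line observation ``Let $q=2$. Since $p$ is odd, we see that $p_{q}=1$. So Corollary~\ref{C1} follows from Theorem~\ref{T1},'' and you have simply written out this specialization in detail, correctly tracking that $p_{2}=1$ forces $p\cdot p_{2}=p$ throughout. The only minor cosmetic point is that in part~(i) you label the prime~$2$ as ``$q$'' while in parts~(ii)--(iii) you (correctly) let $2$ play the role of Theorem~\ref{T1}'s ``$p$''; since part~(i) is symmetric in the two primes this causes no harm, but for a polished write-up you might use the same assignment throughout.
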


\begin{table}[tbph]
\renewcommand{\arraystretch}{1.2} 
\centering
\begin{tabular}{|c|c|c|}
\hline
$3^\eta \equiv 3 \pmod{6}$ & $5^\eta \equiv 5 \pmod{10}$ & $7^\eta \equiv 7 %
\pmod{14}$ \\ \hline
$11^\eta \equiv 11 \pmod{26}$ & $13^\eta \equiv 13 \pmod{26}$ & $17^\eta
\equiv 17 \pmod{28}$ \\ \hline
\vdots & \vdots & \vdots \\ \hline
\end{tabular}%
\caption{$p^{\protect\eta }\equiv p\pmod{2q}$ for $\protect\eta \in \mathbb{N%
}$ where $p$ is a prime.}
\label{Tab1}
\end{table}

The following Theorem \ref{T3} is the most important result of this paper.

\begin{theorem}
\label{T3}Let $p_{1},p_{2},...,p_{n}$ be distinct primes, and let $%
a_{1},a_{2},...,a_{n}\in \mathbb{N}$. Assume that $\gcd
(a_{i},p_{1}p_{2}\cdots p_{n})=p_{i}$ for $1\leq i\leq n$. Then%
\begin{equation*}
a_{1}^{L}+a_{2}^{L}+\cdots +a_{n}^{L}\equiv n-1\ \ \left( \mathrm{mod}%
p_{1}p_{2}\cdots p_{n}\right) ,
\end{equation*}%
for any $L>0$ such that $\mathrm{lcm}\left( p_{1}-1,p_{2}-1,...,p_{n}-1\right)
$ divides $L$.
\end{theorem}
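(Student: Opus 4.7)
The plan is to reduce the congruence modulo each prime factor of $N_n = p_1 p_2 \cdots p_n$ separately and then reassemble the pieces via the Chinese Remainder Theorem. Since the primes $p_1, \ldots, p_n$ are distinct, they are pairwise coprime, so it is enough to verify that $a_1^L + a_2^L + \cdots + a_n^L \equiv n - 1 \pmod{p_j}$ for every $j \in \{1, 2, \ldots, n\}$.

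Fix such a $j$ and split the sum according to whether the index equals $j$. For the term with $i = j$, the hypothesis $\gcd(a_j, N_n) = p_j$ forces $p_j \mid a_j$, so $a_j^L \equiv 0 \pmod{p_j}$. For a term with $i \neq j$, I claim that $\gcd(a_i, p_j) = 1$: otherwise $p_j \mid a_i$ would give $p_j \mid \gcd(a_i, N_n) = p_i$, contradicting $p_i \neq p_j$. Fermat's little theorem (Theorem \ref{FLT}) then yields $a_i^{p_j - 1} \equiv 1 \pmod{p_j}$, and since $(p_j - 1) \mid \mathrm{lcm}(p_1 - 1, \ldots, p_n - 1) \mid L$, writing $L = (p_j - 1)\, k_j$ for some $k_j \in \mathbb{N}$ gives $a_i^L = \bigl(a_i^{p_j - 1}\bigr)^{k_j} \equiv 1 \pmod{p_j}$. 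Adding the $n$ contributions produces $a_1^L + \cdots + a_n^L \equiv (n-1) \cdot 1 + 0 = n - 1 \pmod{p_j}$, as required.

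Because this identity holds modulo each $p_j$ and the primes are pairwise coprime, the Chinese Remainder Theorem combines the $n$ local congruences into the single congruence $a_1^L + \cdots + a_n^L \equiv n - 1 \pmod{p_1 p_2 \cdots p_n}$, finishing the proof. I do not anticipate any serious obstacle: the only delicate point is the coprimality observation $\gcd(a_i, p_j) = 1$ for $i \neq j$, which is an immediate consequence of the hypothesis $\gcd(a_i, N_n) = p_i$ together with the distinctness of the primes. Everything else is a routine application of Theorem \ref{FLT} and the Chinese Remainder Theorem.
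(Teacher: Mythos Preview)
Your proof is correct. Both your argument and the paper's rest on the same two ingredients---Fermat's little theorem to evaluate each $a_i^L$ modulo each $p_j$, and the Chinese Remainder Theorem to glue the prime-by-prime information together---but the organization differs. The paper proceeds by induction on $n$: it invokes Corollary~\ref{C2} for the base case $n=2$, then at the inductive step splits the modulus as $(p_1\cdots p_k)\cdot p_{k+1}$, applies the induction hypothesis modulo $p_1\cdots p_k$, and handles $p_{k+1}$ separately. Your approach is more direct: you fix a single prime $p_j$, observe that exactly one summand vanishes and the remaining $n-1$ contribute $1$ each, and then combine over all $j$ at once. This avoids the inductive scaffolding entirely and makes the structure of the result more transparent; the paper's route, on the other hand, ties the theorem explicitly to the $n=2$ machinery developed earlier (Theorem~\ref{T1} and Corollary~\ref{C2}).
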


\begin{remark}
As $a_{i}=p_{i}$ for $1\leq i\leq n$, Theorem \ref{T3} varifies the
interesting congruence property (\ref{Q2}). In addition, as $n=2$, Corollary %
\ref{C2} is more general than Theorem \ref{T3}. For instance, we consider $%
p=3$ and $q=5$. By Corollary \ref{C2}, we obtain%
\begin{equation*}
3^{4\eta _{1}}+5^{2\eta _{2}}\equiv 1\text{ \ }\left( \mathrm{mod}15\right)
\end{equation*}%
for any $\eta _{1},\eta _{2}\in \mathbb{N}$. However, by Theorem \ref{T3},
we obtain $3^{8}+5^{8}\equiv 1$ $\left( \mathrm{mod}15\right) $.
\end{remark}

Finally, we apply similar techniques used in the proof of Theorem \ref{T1}
to obtain the formulas for the remainder problem (\ref{Q1}) with $n=3$.

\begin{theorem}
\label{T2}Let $p<q<r$ be primes, $a\in \mathbb{N}$, and $L=\mathrm{lcm}\left(
p-1,q-1,r-1\right) $. Assume that
\begin{equation}
qr\equiv 1\ \left( \mathrm{mod}p\right) .  \label{H1}
\end{equation}%
Then the following statements (i)--(v) hold.

\begin{itemize}
\item[(i)] If $\gcd (a,pqr)=1$, then $a^{L}\equiv 1\ \left( \mathrm{mod}%
pqr\right) $.

\item[(ii)] If $\gcd (a,pqr)=r$, then $a^{L}\equiv rq\left( 1-aa_{q}\right)
+aa_{q}$ $\left( \mathrm{mod}pqr\right) .$

\item[(iii)] If $\gcd (a,pqr)=q$, then $a^{L}\equiv r\left( 1-qr\right) \bar{%
\beta}+1\ \left( \mathrm{mod}pqr\right) $ where $\bar{\beta}\in
\{0,1,...,q-1\} $ satisfies that $r\bar{\beta}+1\equiv 0$ $\left( \mathrm{mod}%
q\right) $.

\item[(iv)] If $\gcd (a,pqr)=p$, then $a^{L}\equiv 1-rq\ \left( \mathrm{mod}%
pqr\right) .$

\item[(v)] $a_{1}^{L}+a_{2}^{L}+a_{3}^{L}\equiv 2$ $\left( \mathrm{mod}%
pqr\right) $ where $a_{1},a_{2},a_{3}\in \mathbb{N}$ satisfy that
\begin{equation*}
\gcd (a_{1},pqr)=p\text{, \ }\gcd (a_{2},pqr)=q\text{ \ and \ }\gcd
(a_{3},pqr)=r.
\end{equation*}
\end{itemize}
\end{theorem}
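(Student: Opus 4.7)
The plan is to treat all five statements uniformly by reducing $a^{L}$ modulo each of $p$, $q$, and $r$ via Fermat's little theorem and then recovering the residue modulo $pqr$ by the Chinese Remainder Theorem. Since $L$ is a common multiple of $p-1$, $q-1$, and $r-1$, Theorem \ref{FLT} gives $a^{L}\equiv 1\pmod{\ell}$ whenever $\ell\in\{p,q,r\}$ is coprime to $a$, and trivially $a^{L}\equiv 0\pmod{\ell}$ whenever $\ell\mid a$. These two observations determine the triple $(a^{L}\bmod p,\,a^{L}\bmod q,\,a^{L}\bmod r)$ directly from $\gcd(a,pqr)$, and CRT then pins down $a^{L}$ modulo $pqr$ uniquely.

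For (i), all three residues equal $1$ and CRT yields $a^{L}\equiv 1\pmod{pqr}$. For parts (ii)--(iv), rather than re-deriving the CRT formula from scratch I would simply verify that the proposed closed-form expression has the correct residue modulo each prime, using the hypothesis $qr\equiv 1\pmod p$ exactly once per case. For instance, in (ii) one has $r\mid a$ and $aa_{q}\equiv 1\pmod q$ by Definition \ref{DE}, so
\begin{equation*}
rq(1-aa_{q})+aa_{q}\equiv\begin{cases}1 & \pmod p,\\ 1 & \pmod q,\\ 0 & \pmod r,\end{cases}
\end{equation*}
where the first line uses the hypothesis to replace $rq$ by $1$, the second uses $q\mid rq$ together with $aa_{q}\equiv 1\pmod q$, and the third uses $r\mid rq$ together with $r\mid a$. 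Part (iv) is analogous but simpler: $1-rq$ reduces to $0$, $1$, $1$ modulo $p$, $q$, $r$ respectively. For (iii), the quantity $\bar\beta$ is well-defined because $r$ is invertible modulo $q$, and then $r(1-qr)\bar\beta+1$ reduces to $1$ modulo $p$ (since $1-qr\equiv 0\pmod p$), to $0$ modulo $q$ (by the defining property of $\bar\beta$), and to $1$ modulo $r$.

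Finally, (v) follows immediately from (ii)--(iv): the three residue triples of $(a_{1}^{L}, a_{2}^{L}, a_{3}^{L})$ modulo $(p,q,r)$ are $(0,1,1)$, $(1,0,1)$, and $(1,1,0)$, so the sum is $\equiv 2$ modulo each of $p$, $q$, $r$, and CRT gives $a_{1}^{L}+a_{2}^{L}+a_{3}^{L}\equiv 2\pmod{pqr}$. The only real obstacle is bookkeeping: because the hypothesis $qr\equiv 1\pmod p$ is asymmetric in the three primes, one must be careful about which prime plays which role in each subcase, and about the existence and correct use of the inverses $a_{q}$ and $\bar\beta$. Beyond that, the argument is nothing more than Fermat's little theorem combined with CRT.
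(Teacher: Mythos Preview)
Your proposal is correct and, in fact, somewhat cleaner than the paper's argument. The key difference is that the paper proceeds \emph{constructively}: in each of (ii)--(iv) it writes $a^{L}$ in the form $a(q\alpha+\beta)$ or $r(q\alpha_{2}+\bar\beta)+1$, applies Fermat's little theorem and the hypothesis $qr\equiv 1\pmod p$ to pin down the unknown coefficients one prime at a time, and thereby \emph{derives} the closed-form expression. You instead proceed \emph{verificationally}: since the closed form is already given in the statement, you simply check that it has the correct residue modulo each of $p$, $q$, $r$ and invoke uniqueness from CRT. Your route is shorter and avoids the bookkeeping with the auxiliary integers $\alpha,\beta,w,K,\bar K,\tilde K$ that appear in the paper; the paper's route, on the other hand, explains where the formulas come from. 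For part (v), the contrast is sharper: the paper adds the three explicit formulas from (ii)--(iv) and must show algebraically that $r(1-qr)(\bar\beta+r_{q})\equiv 0\pmod{pqr}$ via the side computation $\bar\beta+r_{q}\equiv 0\pmod q$, whereas your residue-triple argument $(0,1,1)+(1,0,1)+(1,1,0)=(2,2,2)$ bypasses the closed forms entirely and is independent of the hypothesis \eqref{H1}.
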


\begin{remark}
From the proof of Theorem \ref{T2}, we see that Theorem \ref{T2}(v) follows
from Theorem \ref{T2}(ii)--(iv). This conclusion is also consistent with
Theorem \ref{T3} in the case $n=3.$
\end{remark}

\section{Proofs of Main Results}

In this section, we present the proofs of theorems.

\smallskip

\begin{proof}[Proof of Theorem \protect\ref{T1}]
(I) Assume that $\gcd (a,pq)=pq$. Then $a\equiv 0$ $\left( \mathrm{mod}%
pq\right) $, which implies that $a^{\eta }\equiv 0$ $\left( \mathrm{mod}%
pq\right) $. Thus, statement (i) holds.

(II) Assume that $\gcd (a,pq)=1$. Thus, we have $\gcd (a,p)=\gcd (a,q)=1$.
By Theorem \ref{FLT}, we obtain%
\begin{equation}
a^{p-1}\equiv 1\text{ }\left( \mathrm{mod}p\right) \text{ \ and \ }%
a^{q-1}\equiv 1\text{ }\left( \mathrm{mod}q\right) .  \label{c2}
\end{equation}%
In addition, there exist $\alpha _{1},\alpha _{2}\in \mathbb{N}$ such that
\begin{equation}
\mathrm{lcm}(p-1,q-1)=\left( p-1\right) \alpha _{1}\text{ \ and \ }\mathrm{lcm}%
(p-1,q-1)=\left( q-1\right) \alpha _{2}.  \label{c2b}
\end{equation}%
By (\ref{c2}) and (\ref{c2b}), then%
\begin{equation*}
\left\{
\begin{array}{l}
a^{\mathrm{lcm}(p-1,q-1)\eta }=\left( a^{p-1}\right) ^{\alpha _{1}\eta }\equiv
1\text{ \ }\left( \mathrm{mod}p\right) ,\medskip \\
a^{\mathrm{lcm}(p-1,q-1)\eta }=\left( a^{q-1}\right) ^{\alpha _{2}\eta }\equiv
1\text{ \ }\left( \mathrm{mod}q\right) .%
\end{array}%
\right.
\end{equation*}%
Since $\gcd (p,q)=1$, we see that $a^{\mathrm{lcm}(p-1,q-1)\eta }\equiv 1$ $%
\left( \mathrm{mod}pq\right) $. Thus, statement (ii) holds.

(III) Assume that $\gcd (a,pq)=q$. Thus, we have $\gcd (a,p)=1$ and $a\equiv
0$ $\left( \mathrm{mod}q\right) $. By Theorem \ref{FLT}, we obtain%
\begin{equation}
a^{\left( p-1\right) \eta }\equiv 1\text{ }\left( \mathrm{mod}p\right) \text{
\ and \ }a^{\left( p-1\right) \eta }\equiv 0\text{ \ }\left( \mathrm{mod}%
q\right) .  \label{c3c}
\end{equation}%
Since $\gcd (a,pq)=q$, there exists $k_{1}\in \mathbb{N}$ such that%
\begin{equation*}
a^{\left( p-1\right) \eta }=qk_{1}\text{ \ and \ }\gcd (k_{1},p)=1.
\end{equation*}%
Then there exist $\beta _{1}\in \mathbb{Z}$ and $r_{1}\in \{1,2,...,p-1\}$
such that $k_{1}=p\beta _{1}+r_{1}$. It follows that
\begin{equation}
a^{\left( p-1\right) \eta }=qk_{1}=q\left( p\beta _{1}+r_{1}\right) =pq\beta
_{1}+qr_{1}\text{.}  \label{c3}
\end{equation}%
By (\ref{c3c}) and (\ref{c3}), then
\begin{equation}
q\bar{q}\equiv 1\equiv a^{\left( p-1\right) \eta }\equiv qr_{1}\text{ \ }%
\left( \mathrm{mod}p\right) .  \label{c3d}
\end{equation}%
Since $\gcd (p,q)=1$, and by (\ref{c3d}), we see that $q_{p}\equiv r_{1}$ $%
\left( \mathrm{mod}p\right) $. So $q_{p}=r_{1}$. Then by (\ref{c3}), we see
that $a^{\eta }\equiv qq_{p}$ $\left( \mathrm{mod}pq\right) $. Thus, statement
(iii) holds.

(IV) Assume that $\gcd (a,pq)=p$. Thus, we have $\gcd (a,q)=1$ and $a\equiv
0 $ $\left( \mathrm{mod}p\right) $. By Theorem \ref{FLT}, we obtain%
\begin{equation}
a^{\left( q-1\right) \eta }\equiv 0\text{ }\left( \mathrm{mod}p\right) \text{
\ and \ }a^{\left( q-1\right) \eta }\equiv 1\text{ }\left( \mathrm{mod}%
q\right) ,  \label{c4a}
\end{equation}%
from which it follows that $a^{\left( q-1\right) \eta }=qk_{2}+1$ for some $%
k_{2}\in \mathbb{Z}$ . Then there exist $\beta _{2}\in \mathbb{Z}$ and $%
r_{2}\in \{0,1,...,p-1\}$ such that $k_{2}=p\beta _{2}+r_{2}$. It follows
that%
\begin{equation}
a^{\left( q-1\right) \eta }=qk_{2}+1=q\left( p\beta _{2}+r_{2}\right)
+1=pq\beta _{2}+qr_{2}+1\text{.}  \label{c4c}
\end{equation}%
By (\ref{c4a}) and (\ref{c4c}), we observe that%
\begin{equation*}
1-qq_{p}\equiv 0\equiv a^{\left( q-1\right) \eta }\equiv qr_{2}+1\text{ \ }%
\left( \mathrm{mod}p\right) ,
\end{equation*}%
which implies that%
\begin{equation}
q\left( r_{2}+q_{p}\right) \equiv 0\text{ \ }\left( \mathrm{mod}p\right) ,
\label{c4d}
\end{equation}%
Since $\gcd (p,q)=1$, and by (\ref{c4d}), we see that $-q_{p}\equiv r_{2}$ $%
\left( \mathrm{mod}p\right) $. So $p-q_{p}=r_{2}$. Then by (\ref{c4c}), we see
that%
\begin{equation}
a^{\left( q-1\right) \eta }=pq\beta _{2}+q\left( p-q_{p}\right) +1\equiv
1-qq_{p}\text{ \ }\left( \mathrm{mod}pq\right) .  \label{c4e}
\end{equation}%
Since $1<qq_{p}<pq$, and by (\ref{c4e}), we see that $pq+1-qq_{p}$ is the
remainder when $a^{\left( q-1\right) \eta }$ is divided by $pq$. Thus,
statement (iv) holds. The proof is complete.
\end{proof}

\smallskip

\begin{proof}[Proof of Corollary \protect\ref{C2}]
By Theorem \ref{T1}(iii)(iv), we obtain%
\begin{equation}
a_{1}^{\left( q-1\right) \eta _{1}}+a_{2}^{\left( p-1\right) \eta
_{2}}\equiv qq_{p}+\left( 1-qq_{p}\right) \equiv 1\text{ \ }\left( \mathrm{mod}%
pq\right) .  \label{C2a}
\end{equation}%
Since $\mathrm{lcm}(p-1,q-1)$ $|$ $L$, there exists $\alpha _{1},\alpha
_{2}\in \mathbb{N}$ such that $L=\left( q-1\right) \alpha _{1}=\left(
p-1\right) \alpha _{2}$. So by (\ref{C2a}),%
\begin{equation*}
a_{1}^{L}+a_{2}^{L}\equiv a_{1}^{\left( q-1\right) \alpha
_{1}}+a_{2}^{\left( p-1\right) \alpha _{2}}\equiv 1\text{ \ }\left( \mathrm{mod%
}pq\right) .
\end{equation*}%
The proof is complete.
\end{proof}

\smallskip

\begin{proof}[Proof of Corollary \protect\ref{C1}]
Let $q=2$. Since $p$ is odd, we see that $p_{q}=1$. So Corollary \ref{C1}
follows from Theorem \ref{T1}. The proof is complete.
\end{proof}

\smallskip

\begin{proof}[Proof of Theorem \protect\ref{T3}]
Theorem \ref{T3} immediately follows as $n=1$. By Corollary \ref{C2}, then
Theorem \ref{T3} holds for $n=2$. Assume that Theorem \ref{T3} holds for $%
n=k $.

Next, we consider $n=k+1$. Let $L>0$ such that $\mathrm{lcm}\left(
p_{1}-1,p_{2}-1,...,p_{k+1}-1\right) $ divides $L$. It implies that $\mathrm{%
lcm}\left( p_{1}-1,p_{2}-1,...,p_{k}-1\right) $ divides $L$. Then
\begin{equation}
a_{1}^{L}+a_{2}^{L}+\cdots +a_{k}^{L}\equiv k-1\ \ \left( \mathrm{mod}%
p_{1}p_{2}\cdots p_{k}\right) .  \label{te7}
\end{equation}%
In addition, since $\gcd (a_{k+1},p_{i})=1$ for $i=1,2,...,k$, and by
Theorem \ref{FLT}, we observe that
\begin{equation*}
a_{k+1}^{L}\equiv 1\ \ \left( \mathrm{mod}p_{i}\right) \text{ \ for }%
i=1,2,...,k.
\end{equation*}%
Since $p_{1},p_{2},...,p_{k}$ are distinct primes, we see that
\begin{equation}
a_{k+1}^{L}\equiv 1\ \ \left( \mathrm{mod}p_{1}p_{2}\cdots p_{k}\right) .
\label{te6}
\end{equation}%
By (\ref{te7}) and (\ref{te6}), then%
\begin{equation}
a_{1}^{L}+a_{2}^{L}+\cdots +a_{k+1}^{L}\equiv k\text{ \ }\left( \mathrm{mod}%
p_{1}\cdots p_{k}\right) .  \label{te4}
\end{equation}%
Since $\gcd (a_{i},p_{k+1})=1$ and $\gcd (a_{k+1},p_{k+1})=p_{k+1}$ for $%
i=1,2,...,k$, and by Fermat's Little Theorem, we observe that%
\begin{equation*}
a_{i}^{L}\equiv 1\text{ \ }\left( \mathrm{mod}p_{k+1}\right) \text{ \ and \ }%
a_{k+1}^{L}\equiv 0\text{ \ }\left( \mathrm{mod}p_{k+1}\right) \text{ \ for }%
i=1,2,...,k,
\end{equation*}%
which implies that
\begin{equation}
a_{1}^{L}+a_{2}^{L}+\cdots +a_{k+1}^{L}\equiv k\text{ \ }\left( \mathrm{mod}%
p_{k+1}\right) .  \label{te5}
\end{equation}%
By (\ref{te4}) and (\ref{te5}), we obtain%
\begin{equation*}
p_{1}^{L}+p_{2}^{L}+\cdots +p_{k+1}^{L}\equiv k\text{ \ }\left( \mathrm{mod}%
p_{1}\cdots p_{k}p_{k+1}\right) ,
\end{equation*}%
which implies that Theorem \ref{T3} holds for $n=k+1$. So Theorem \ref{T3}
holds by mathematical induction. The proof is complete.
\end{proof}

\smallskip

\begin{proof}[Proof of Theorem \protect\ref{T2}]
(I) Assume that $\gcd (a,pqr)=1.$ Since $L=\mathrm{lcm}\left(
p-1,q-1,r-1\right) $, and by Theorem \ref{FLT}, we observe that
\begin{equation*}
a^{L}\equiv 1\ \ \left( \mathrm{mod}p\right) ,\text{ \ }a^{L}\equiv 1\ \
\left( \mathrm{mod}q\right) \text{ \ and \ }a^{L}\equiv 1\ \ \left( \mathrm{mod}%
r\right) .
\end{equation*}%
Since $p$, $q$ and $r$ are primes, it follows that $a^{L}\equiv 1\ \left(
\mathrm{mod}pqr\right) $. Thus, statement (i) holds.

(II) Assume that $\gcd (a,pqr)=r$. It follows that $a^{L}=a\left( q\alpha
+\beta \right) $ for some $\alpha \in \mathbb{Z}$ and $\beta \in
\{1,2,...,q-1\}$. Since $\gcd (a,q)=1$, and by Theorem \ref{FLT}, we see
that
\begin{equation*}
a^{L}=a\left( q\alpha +\beta \right) \equiv a\beta \equiv 1\text{ \ }\left(
\mathrm{mod}q\right) ,
\end{equation*}%
which implies that $\beta =a_{q}$. Since $\gcd (a,pqr)=r$, we see that $a=rw$
for some $w\in \mathbb{N}$. By Theorem \ref{FLT} and (\ref{H1}), we obtain%
\begin{equation*}
a^{L}=a\left( q\alpha +a_{q}\right) =\left( rq\right) w\alpha +aa_{q}\equiv
w\alpha +aa_{q}\equiv 1\text{\ \ }\left( \mathrm{mod}p\right) ,
\end{equation*}%
which implies that $w\alpha =1-aa_{q}+pK_{1}$ for some $K\in \mathbb{Z}$.
Therefore, we observe that%
\begin{eqnarray*}
a^{L} &=&a\left( q\alpha +a_{q}\right) \\
&=&rq\left( w\alpha \right) +aa_{q} \\
&=&rq\left( 1-aa_{q}+pK\right) +aa_{q} \\
&\equiv &rq\left( 1-aa_{q}\right) +aa_{q}\text{ \ }\left( \mathrm{mod}%
pqr\right) .
\end{eqnarray*}%
Thus, statement (ii) holds.

(III) Assume that $\gcd (a,pqr)=q$. Since $\gcd (a,r)=1$, and by Theorem \ref%
{FLT}, we see that $a^{L}\equiv 1$ $\left( \mathrm{mod}r\right) $. It follows
that
\begin{equation}
a^{L}=r\alpha _{1}+1=r\left( q\alpha _{2}+\bar{\beta}\right) +1  \label{z2}
\end{equation}%
for some $\alpha _{1},\alpha _{2}\in \mathbb{Z}$ and $\bar{\beta}\in
\{0,1,...,q-1\}$. Since $a\equiv 0$ $\left( \mathrm{mod}q\right) $, and by (%
\ref{z2}), we see that $r\bar{\beta}+1\equiv 0$ $\left( \mathrm{mod}q\right) $%
. Since $\gcd (a,p)=1$, and by Theorem \ref{FLT}, (\ref{H1}) and (\ref{z2}),
we obtain%
\begin{equation*}
a^{L}\equiv \alpha _{2}+r\bar{\beta}+1\equiv 1\text{ \ }\left( \mathrm{mod}%
p\right) ,
\end{equation*}%
which implies that $\alpha _{2}+r\bar{\beta}\equiv 0$ $\left( \mathrm{mod}%
p\right) $. Then $\alpha _{2}=p\bar{K}-r\bar{\beta}$ for some $\bar{K}\in
\mathbb{Z}$. So by (\ref{z2}), we observe that%
\begin{eqnarray*}
a^{L} &=&r\left[ q\left( p\bar{K}-r\bar{\beta}\right) +\bar{\beta}\right] +1
\\
&=&pqr\bar{K}+r\left( 1-qr\right) \bar{\beta}+1 \\
&\equiv &r\left( 1-qr\right) \bar{\beta}+1\text{ \ }\left( \mathrm{mod}%
pqr\right) .
\end{eqnarray*}%
Thus, statement (iii) holds.

(IV) Assume that $\gcd (a,pqr)=p$. Since $\gcd (a,r)=1$, we have (\ref{z2}).
Since $\gcd (a,q)=1$, and by Theorem \ref{FLT} and (\ref{z2}), we see that%
\begin{equation*}
a^{L}\equiv r\bar{\beta}+1\equiv 1\text{ \ }\left( \mathrm{mod}q\right) ,
\end{equation*}%
which implies that $r\bar{\beta}\equiv 0$ $\left( \mathrm{mod}q\right) $.
Since $\gcd (r,q)=1$, we see that $\bar{\beta}=0$. Since $a\equiv 0$ $\left(
\mathrm{mod}p\right) $, and by (\ref{H1}) and (\ref{z2}), we observe that%
\begin{equation*}
a^{L}=rq\alpha _{2}+1\equiv \alpha _{2}+1\equiv 0\text{ \ }\left( \mathrm{mod}%
p\right) ,
\end{equation*}%
which implies that $\alpha _{2}=p\tilde{K}-1$ for some $\tilde{K}\in \mathbb{%
Z}$. Then
\begin{equation*}
a^{L}=rq\alpha _{2}+1=rq\left( p\tilde{K}-1\right) +1\equiv -rq+1\text{ \ }%
\left( \mathrm{mod}pqr\right) .
\end{equation*}%
Thus, statement (iv) holds.

(V) By Theorem \ref{T2}(ii)--(iv), then%
\begin{eqnarray}
r^{L}+q^{L}+p^{L} &\equiv &\left[ rq\left( 1-rr_{q}\right) +rr_{q}\right] +%
\left[ r\left( 1-qr\right) \bar{\beta}+1\right] +\left( 1-rq\right) \text{ \
}\left( \mathrm{mod}pqr\right) \smallskip  \notag \\
&\equiv &r\left( 1-qr\right) \left( \bar{\beta}+r_{q}\right) +2\text{ \ }%
\left( \mathrm{mod}pqr\right) .  \label{z4}
\end{eqnarray}%
Since $rr_{q}\equiv 1$ $\left( \mathrm{mod}q\right) $ and $r\bar{\beta}%
+1\equiv 0$ $\left( \mathrm{mod}q\right) $, we observe that%
\begin{equation}
r\left( r_{q}+\bar{\beta}\right) =\left( rr_{q}-1\right) +\left( r\bar{\beta}%
+1\right) \equiv 0\text{ \ }\left( \mathrm{mod}q\right) .  \label{z5}
\end{equation}%
Since $\gcd (q,r)=1$, and by (\ref{z5}), we see that
\begin{equation}
\bar{\beta}+r_{q}\equiv 0\ \ \left( \mathrm{mod}q\right) .  \label{z6}
\end{equation}%
By (\ref{H1}), we obtain that $1-qr\equiv 0$ $\left( \mathrm{mod}p\right) $.
So by (\ref{z4}) and (\ref{z6}), $r^{L}+q^{L}+p^{L}\equiv 2$ $\left( \mathrm{%
mod}pqr\right) .$ Thus, statement (v) holds. The proof is complete.
\end{proof}

\section{Examples}

In this section, we provide two examples to illustrate our main results.

\begin{example}
\label{Ex2}Let $a,\eta \in \mathbb{N}$. Since $133=7\times 19$, we consider $%
p=7$ and $q=19$. Since $1\equiv qq_{p}\equiv 5q_{p}\ \left( \mathrm{mod}%
7\right) $, we obtain $q_{p}=3$. So by Theorem \ref{T1}, we obtain%
\begin{equation}
\left\{
\begin{array}{lll}
a^{\eta }\equiv 0 & \left( \mathrm{mod}133\right) & \text{if }\gcd
(a,133)=133,\medskip \\
a^{18\eta }\equiv 1 & \left( \mathrm{mod}133\right) & \text{if }\gcd
(a,133)=1,\medskip \\
a^{18\eta }\equiv 77 & \left( \mathrm{mod}133\right) & \text{if }\gcd
(a,133)=7,\medskip \\
a^{6\eta }\equiv 57 & \left( \mathrm{mod}133\right) & \text{if }\gcd
(a,133)=19.%
\end{array}%
\right.  \label{2a}
\end{equation}%
Futhermore, $19^{6\eta _{1}}+7^{18\eta _{2}}\equiv 1$ $\left( \mathrm{mod}%
133\right) $ for any $\eta _{1},\eta _{2}\in \mathbb{N}$. Clearly, equation (%
\ref{2a}) provides more information about the remainder than equation (\ref%
{4b}).
\end{example}

\bigskip

\begin{example}
\label{Ex4}Let $a\in \mathbb{N}$. Since $66=2\times 3\times 11$, we consider
$p=2$, $q=3$ and $r=11$. Notice that
\begin{equation*}
qr=33\equiv 1\ \left( \mathrm{mod}p=2\right) ,\text{ \ and \ }L=\mathrm{lcm}%
(p-1,q-1,r-1)=10,
\end{equation*}%
which implies that (\ref{H1}) holds. To prove (\ref{4c}), we consider four
cases.

\smallskip \noindent Case 1. Assume that $\gcd (a,66)=1$. By Theorem \ref{T2}%
(i), then $a^{10}\equiv 1$ $\left( \mathrm{mod}66\right) .$

\smallskip \noindent Case 2. Assume that $\gcd (a,66)=11$. There exists $%
k\in \mathbb{Z}$ such that $a=11(6k+1)$ or $11\left( 6k+5\right) .$

If $a=11(6k+1)$, then
\begin{equation*}
aa_{q}=11(6k+1)a_{q}\equiv 2a_{q}\equiv 1\ \ \left( \mathrm{mod}q=3\right) ,
\end{equation*}%
which implies that $a_{q}=2$. So by Theorem \ref{T2}(ii), then%
\begin{eqnarray*}
a^{10} &\equiv &rq\left( 1-aa_{q}\right) +aa_{q} \\
&\equiv &33+2a\text{ \ }\left( \mathrm{mod}66\right) \\
&\equiv &33+22(6k+1)\text{ \ }\left( \mathrm{mod}66\right) \\
&\equiv &55\text{ \ }\left( \mathrm{mod}66\right) .
\end{eqnarray*}%
If $a=11\left( 6k+5\right) $, then
\begin{equation*}
aa_{q}=11(6k+5)a_{q}\equiv a_{q}\equiv 1\ \ \left( \mathrm{mod}3\right) ,
\end{equation*}%
which implies that $a_{q}=1$. By Theorem \ref{T2}(ii), then
\begin{eqnarray*}
a^{10} &\equiv &rq\left( 1-aa_{q}\right) +aa_{q} \\
&\equiv &33-32a\ \ \left( \mathrm{mod}66\right) \\
&\equiv &33-32\times 11\left( 6k+5\right) \ \ \left( \mathrm{mod}66\right) \\
&\equiv &55\text{ \ }\left( \mathrm{mod}66\right) .
\end{eqnarray*}%
Thus $a^{10}\equiv 55$ $\left( \mathrm{mod}66\right) .$

\smallskip \noindent Case 3. Assume that $\gcd (a,66)=3$. Since $11\bar{\beta%
}+1\equiv 0$ $\left( \mathrm{mod}3\right) $, we obtain $\bar{\beta}=1$. So by
Theorem \ref{T2}(iii), then $a^{10}\equiv 45\ \left( \mathrm{mod}66\right) $.

\smallskip \noindent Case 4. Assume that $\gcd (a,66)=2$. By Theorem \ref{T2}%
(iv), then $a^{10}\equiv 34$ $\left( \mathrm{mod}66\right) .$

From the above discussions, we prove (\ref{4c}).
\end{example}

\smallskip

\begin{example}
\label{Ex5}Consider $p_{1}=3$, $p_{2}=7$, $p_{3}=11$ and $p_{4}=17$. It is
easy to compute%
\begin{equation*}
\mathrm{lcm}\left( p_{1}-1,p_{2}-1,p_{3}-1,p_{4}-1\right) =\mathrm{lcm}\left(
2,6,10,16\right) =240.
\end{equation*}
By Theorem \ref{T2}, then%
\begin{equation*}
3^{^{240\eta }}+7^{^{240\eta }}+11^{^{240\eta }}+17^{^{240\eta }}\equiv 4%
\text{ \ }\left( \mathrm{mod}3927=3\times 7\times 11\times 17\right)
\end{equation*}%
for any $\eta \in \mathbb{N}$.
\end{example}

\bigskip

\noindent \textbf{Acknowledge.} We would like to thank Prof. J.-C. Cheng for
his valuable suggestions.

\smallskip

\end{document}